\documentclass[12pt]{amsart}
\usepackage[utf8]{inputenc}
\usepackage{amssymb, amsmath}
\usepackage{fourier}
\usepackage{enumerate}

\usepackage{geometry}
\geometry{
 a4paper,
 left=30mm,
 right=30mm,
 top=30mm,
 bottom=30mm,
 }

\theoremstyle{plain}

\newtheorem{theorem}{Theorem}

\newtheorem{claim}[theorem]{Claim}

\newtheorem{definition}[theorem]{Definition}

\theoremstyle{remark}

\newtheorem{example}{Example}

\newcommand{\bit}{\begin{itemize}}
\newcommand{\eit}{\end{itemize}}
\newcommand{\ben}{\begin{enumerate}}
\newcommand{\een}{\end{enumerate}}
\newcommand{\be}{\begin{equation}}
\newcommand{\ee}{\end{equation}}
\newcommand{\ba}{\begin{array}}
\newcommand{\ea}{\end{array}}

\newcommand{\mc}[1]{\mathcal{#1}}

\newcommand{\abs}[1]{\left|#1\right|}
\newcommand{\norm}[1]{\left|\left|#1\right|\right|}

\newcommand{\dd}{\mathrm{d}}

\newcommand{\eps}{\varepsilon}

\newcommand\cA{\mathcal A}
\newcommand\cB{\mathcal B}

\newcommand\R{\mathbb R}

\newcommand{\ler}[1]{\left( #1 \right)}

\title[Isometries on probability measures]{Maps on probability measures preserving certain distances --- a survey and some new results}

\author{D\'aniel Virosztek}
\address{Institute of Science and Technology Austria \\
Am Campus 1, 3400 Klosterneuburg, Austria}
\email{daniel.virosztek@ist.ac.at}
\urladdr{http://pub.ist.ac.at/\~{}dviroszt}

\dedicatory{Dedicated to the memory of Professor D\'enes Petz}
\thanks{The author was supported by the ISTFELLOW program of the Institute of Science and Technology Austria (project code IC1027FELL01) and partially supported by the Hungarian National Research, Development and Innovation Office – NKFIH (grant no. K124152).}

\keywords{Wasserstein isometies, unit sphere}
\subjclass[2010]{Primary: 46E27, 54E40.}

\begin{document}

\begin{abstract}
Borel probability measures living on metric spaces are fundamental mathematical objects. There are several meaningful distance functions that make the collection of the probability measures living on a certain space a metric space. We are interested in the description of the structure of the isometries of such metric spaces. We overview some of the recent results of the topic and we also provide some new ones concerning the Wasserstein distance. More specifically, we consider the space of all Borel probability measures on the unit sphere of a Euclidean space endowed with the Wasserstein metric $W_p$ for arbitrary $p \geq 1,$ and we show that the action of a Wasserstein isometry on the set of the Dirac measures is induced by an isometry of the underlying unit sphere.
\end{abstract}
\maketitle

\section{Introduction}
The study of isometries of various metric spaces has a huge literature. Some results that describe the structure of the isometries of some highly important spaces are very well-known. From our viewpoint, the most interesting classical result is the \emph{Banach-Stone theorem} that describes the surjective linear isometries between the function spaces $C(X)$ and $C(Y),$ where $X$ and $Y$ are compact Hausdorff spaces. The Banach-Stone theorem says that every such isometry is the composition of an isometry induced by a homeomorphism between the underlying spaces $X$ and $Y$ and a trivial isometry. (We will make this statement precise later.)
Another example of the well-know classical results on isometries is the \emph{Mazur-Ulam theorem} which states that a surjective isometry between real normed spaces is necessarily affine. For a comprehensive study of isometries, moreover, other types of preserver problems, we refer to the monographs \cite{fleming-jamison-1, fleming-jamison-2, molnar-book}.
\par
Isometries of spaces of measures (or distribution functions) have also been studied extensively. In a series of papers, \emph{Lajos Moln\'ar} (partially with \emph{Gregor Dolinar}) described the isometries of the distribution functions with respect to the \emph{Kolmogorov-Smirnov metric} and the \emph{L\'evy metric} (see \cite{dolinar-molnar,molnar-kolm,molnar-levy}). As a substantial generalization of Moln\'ar's result on the L\'evy isometries, \emph{Gy\"orgy P\'al Geh\'er} and \emph{Tam\'as Titkos} managed to describe the surjective isometries of the space of all Borel probability measures on a separable real Banach space with respect to the \emph{L\'evy-Prokhorov distance} \cite{geher-titkos}. Geh\'er also described the surjective isometries of the probability measures on the real line with respect to the \emph{Kuiper metric} \cite{geher-kuiper}. The \emph{Wasserstein isometries} have been investigated by \emph{J\'erome Bertrand and Benoit R. Kloeckner} on various spaces with the special choice of the parameter $p=2$ \cite{Kloeckner-2008,bertrand-kloeckner-2016}. \par
Our goal is to study the Wasserstein isometries on probability measures defined on unit spheres for an arbitrary parameter $p \geq 1.$ We make some progress in the direction of a Banach-Stone-type result, that is, we show that the action of a Wasserstein isometry on the set of the Dirac measures is induced by an isometry of the underlying unit sphere.   
\section{Optimal transport}
\subsection{Motivation}
Let us consider the following problem. There are $m$ producers of a certain product, say, $x_1, \dots, x_m,$ and there are $n$ customers which are denoted by $y_1, \dots, y_n.$ The producer $x_i$ offers $p_i$ unit of the product and the customer $y_j$ needs $q_j$ unit of it. Assume that we are in the fortunate situation when the total demand coincides with the total supply, that is, $\sum_{i=1}^m p_i=\sum_{j=1}^n q_j.$ For the sake of simplicity, we assume that the aforementioned quantities are equal to $1.$ Let us denote the cost of transferring a unit of product from $x_i$ to $y_j$ by $c(i,j).$ A {\it transference plan} (or {\it transport plan}) is a declaration of the amounts of the product that are to be transferred from the sources to the targets. Let $t(i,j)$ denote the amount that is to be transferred from $x_i$ to $y_j.$ Then a transference plan is an array of nonnegative real numbers $\left\{ t(i,j)\right\}_{i=1, \, j=1}^{m \quad n}$ such that $\sum_{j=1}^n t(i,j)=p_i$ and $\sum_{i=1}^m t(i,j)=q_j$ for all $i$ and $j.$
\par
We are interested in finding the minimal cost of transferring the product from the producers to the customers. Clearly, the minimal cost is
\be \label{eq:disc-min-cost}
\mathrm{inf} \, \sum_{i,j} c(i,j) t(i,j)
\ee
where the infimum runs over all transport plans. The quantity \eqref{eq:disc-min-cost} is called the {\it optimal transport cost} between the probability measures $\left\{p_i\right\}_{i=1}^m$ and $\left\{q_j\right\}_{j=1}^n.$

\subsection{The mathematical treatment of more general optimal transport problems}
The optimal transport cost may be defined between any Borel probability measures on sufficiently nice spaces. The key notion which is needed to define optimal transport cost between general probability measures is the \emph{coupling,} which is a basic concept in probability theory with a lot of applications that are different from optimal transport (see, e.g., \cite[Chapter 1]{villani-book}).

\begin{definition}[Coupling] \label{defi:coupling}
Let $X$ and $Y$ be Polish (that is, separable and complete) metric spaces and let $\mu$ and $\nu$ be Borel probability measures on $X$ and $Y,$ respectively. A Borel probability measure $\pi$ on $X \times Y$ is said to be a \emph{coupling} of $\mu$ and $\nu$ if the marginals of $\pi$ are $\mu$ and $\nu,$ that is, $\pi\ler{A \times Y}=\mu(A)$ and $\pi\ler{X \times B}=\nu(B)$ for all Borel sets $A\subset X$ and $B \subset Y.$
\end{definition}

Let us denote the set of all couplings of the probability measures $\mu$ and $\nu$ by $\Pi\ler{\mu, \nu}.$ Now, let $c(x,y)$ stand for the cost of transporting one unit of mass from $x \in X$ to $y \in Y.$ (In this contex, the word "mass" refers to something that is to be transferred.)
The optimal transport cost between the measures $\mu$ and $\nu$ is defined as
\be \label{eq:opt-tr-cost-def}
C\ler{\mu, \nu}:=\inf_{\pi \in \Pi\ler{\mu, \nu}}  \int_{X \times Y} c (x,y) \mathrm{d}\pi(x,y).
\ee
\subsection{Metric properties of the optimal transport cost}
One may expect that the quantity \eqref{eq:opt-tr-cost-def} serves as a distance between probability measures. In general, $\ler{\mu,\nu}\mapsto C\ler{\mu, \nu}$ is not a metric, but there are some important special cases when it is indeed a metric. When the cost function is defined in terms of a metric appropriately, then the optimal transport cost is (in a very simple correspondence with) a metric on measures. The \emph{Wasserstein distances} are metrics on measures that are defined as very simple functions of optimal transport costs induced by special cost functions. In order to define Wasserstein distances, first we need to define \emph{Wasserstein spaces}. Here and throughout, let $P(X)$ denote the set of all Borel probability measures on a metric space $X.$

\begin{definition}[Wasserstein spaces] \label{defi:wass-space}
Let $(X,d)$ be Polish metric space and let $1 \leq p <\infty.$ The \emph{Wasserstein space of order $p$} is defined as
$$
P_p(X):=\left\{ \mu \in P(X) \, \middle| \, \int_X d(x_0,x)^p \dd \mu(x)< \infty \text{ for some (hence all) } x_0 \in X\right\}.
$$
\end{definition}
In words, the Wasserstein space of order $p$ consists of the probability distributions that have finite moment of order $p.$ Clearly, if the metric $d$ is bounded on $X,$ then we have $P_p(X)=P(X)$ for all $p \in [1, \infty).$
\par
Now we are in the position to define the Wasserstein distances.

\begin{definition}[Wasserstein distances] \label{defi:wass-dist}
With the same conventions as in Definition \ref{defi:wass-space}, the \emph{Wasserstein distance of order $p$} between $\mu \in P_p(X)$ and $\nu \in P_p(X)$ is defined by the formula
\be \label{eq:wasser_def}
W_p\ler{\mu, \nu}:=\ler{\inf_{\pi \in \Pi(\mu, \nu)} \int_{X \times X} d (x,y)^p \mathrm{d}\pi(x,y)}^{\frac{1}{p}}.
\ee
\end{definition}
It can be shown that the Wasserstein distance of order $p$ (or $p$-Wasserstein distance) is a true metric on $P_p(X)$ (see, e.g., \cite[Chapter 6]{villani-book}, or \cite{dobrushin} for the special case $p=1$). The Wasserstein distances encode valuable geometric information as they are defined in terms of the underlying geometry. In particular we have $W_p \ler{\delta_x, \delta_y}=d(x,y)$ for any Polish space $(X,d)$ and any $x,y \in X$ and $1 \leq p<\infty.$ (Here and throughout, $\delta_x$ denotes the \emph{Dirac measure} concentrated on the point $x \in X.$) Consequently, the Polish space $X$ can be embedded isometrically into the measure space $P_p(X)$ by the map $x \mapsto \delta_x$ for any $1\leq p<\infty.$ Moreover, any isometry of $X$ induces a $p$-Wasserstein isometry on $P_p(X)$ (for any $p$) by the \emph{push-forward of measures}. This latter concept is of a particular importance in measure theory and it will play a crucial role throughout this paper, hence we define it in a quite general context.

\begin{definition}[Push-forward] \label{defi:push-forw}
Let $(X,\cA)$ and $(Y, \cB)$ be measurable spaces and let $\mu$ be complex measure on $X.$ Let $\psi: (X,\mathcal{A}) \rightarrow (Y, \cB)$ be a measurable map. Then the \emph{push-forward of the measure $\mu$} by the map $\psi$ is denoted by $\psi_\# \mu$ and it is defined by
$$
\psi_\# \mu(B):=\mu \ler{\psi^{-1}(B)} \qquad \ler{B \in \cB}.
$$

\end{definition}

Indeed, it is easy to see that for an isometry $\psi: X \rightarrow X$ the induced push-forward of measures $\psi_\# : P_p(X) \rightarrow P_p(X)$ is a $p$-Wasserstein isometry for any $p.$ So, we have a natural group homomorphism from the isometry group of $X$ into the isometry group of $P_p(X)$ which looks as follows:
\be \label{eq:hashtag}
\#: \, \mathrm{Isom} \, X \rightarrow \mathrm{Isom} \, P_p(X); \qquad \psi \mapsto \psi_\#.
\ee

\subsection{Some remarkable properties of the Wasserstein distance of order $1$}
In the sequel we recall two interesting properties of the distance $W_1$ (which is also commonly called the \emph{Kantorovich–Rubinstein distance}) on particular Polish metric spaces.

\begin{example} \label{ex:w1-tv}
The \emph{total variation distance} is a well known metric on $P(X)$ defined by the formula
\be \label{eq:tv-def}
d_{TV}\ler{\mu, \nu}=\sup_{B \in \cB_X}\abs{\mu(B)-\nu(B)},
\ee
where $\cB_X$ denotes the collection of all Borel sets of $X.$
\par
Let $(X,d)$ be a discrete metric space, that is, $X$ is a nonempty set and $d: X \times X \rightarrow [0,\infty)$ is defined by
$$
d(x,y)=
\begin{cases}
0, \text{ if } x=y, \\
1, \text{ if } x\neq y.
\end{cases}
$$
Then the total variation distance coincides with the Wasserstein distance of order $1$ on $P(X)=P_1(X)$ (see \cite{dobrushin} and \cite{vallender}).
\end{example}

\begin{example} \label{ex:w1-kant}
Let $X=\R$ equipped with the usual (Euclidean) metric. In this special case, the Wasserstein distance of order $1$ can be expressed explicitly in terms of the cumulative distribution functions by the formula
\be \label{eq:w1r-ekv}
W_1(\mu, \nu)=\int_\R \abs{F(x)-G(x)},
\ee
where $F(x)=\mu\ler{(-\infty,x]}$ and $G(x)=\nu\ler{(-\infty,x]}.$ This result is due to \emph{Vallender} \cite{vallender}.

\end{example}

\section{Isometries of measure spaces: an overview of the literature}

The study of isometries of measure spaces is an extensive topic in the area of preserver problems. Throughout this paper, by measure spaces we mean collections of Borel probability measures on Polish metric spaces. Different notions of distance lead to different geometry on measure spaces. In order to understand a geometric structure one has to face several challenges. One of the most fundamental characteristics of a geometric structure is its isometry group, so the description of the isometries belongs certainly to the important challenges.
\par
In the sequel we recall some results on isometries of measure spaces. Certainly, this enumeration of the relevant works is far from being complete. As the main result of this note is a step in the direction of a \emph{Banach-Stone-type result,} first we shall recall the famous Banach-Stone theorem.

\begin{theorem}[Banach-Stone] \label{thm:banach-stone}
Let $X$ and $Y$ be compact, Hausdorff topological spaces and let $C(X)$ and $C(Y)$ denote the spaces of all continuous complex-valued functions on $X$ and $Y,$ respectively (equipped with the supremum norm). Let $T: C(X) \rightarrow C(Y)$ be a \emph{surjective, linear} isometry. Then there exists a homeomorphism $\varphi: Y \rightarrow X$ and a function $u \in C(Y)$ with $\abs{u(y)}=1$ for all $y \in Y$
such that
$$
(T f) (y)=u(y) f \ler{\varphi(y)} \qquad \text{ for all } y \in Y \text{ and } f \in C(X).
$$
\end{theorem}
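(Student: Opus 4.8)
The plan is to pass to the adjoint map and exploit the description of the extreme points of the unit ball of the dual space $C(X)^*$. Since $T$ is a surjective linear isometry, its adjoint $T^* : C(Y)^* \to C(X)^*$ is again a surjective linear isometry, and it is moreover a homeomorphism with respect to the weak-$*$ topologies. By the Riesz representation theorem I would identify $C(X)^*$ with the space $M(X)$ of regular complex Borel measures on $X$ equipped with the total variation norm, and similarly for $Y$; the closed unit ball of each dual is weak-$*$ compact by Banach--Alaoglu, which makes the extreme-point machinery available.

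The crucial ingredient is the characterization of the extreme points of the closed unit ball of $M(X)$: they are precisely the measures of the form $\lambda \delta_x$ with $x \in X$ and $\lambda \in \C$, $\abs{\lambda} = 1$. I would establish this in two directions. First, each $\lambda \delta_x$ is extreme, which follows from the fact that if $\lambda \delta_x = \tfrac{1}{2}(\mu_1 + \mu_2)$ with $\mu_1, \mu_2$ in the unit ball, then comparing total variations forces $\mu_1 = \mu_2$. Conversely, an extreme point must be supported on a single point: if its support contained two distinct points one could split the mass across a Borel partition to produce a nontrivial convex decomposition inside the unit ball, contradicting extremality.

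Since a surjective linear isometry between Banach spaces maps the unit ball onto the unit ball, $T^*$ carries extreme points to extreme points bijectively. In particular, for each $y \in Y$ the evaluation functional $\delta_y$ is extreme, so there exist $u(y) \in \C$ with $\abs{u(y)} = 1$ and a point $\varphi(y) \in X$ such that $T^* \delta_y = u(y)\, \delta_{\varphi(y)}$. This already yields the desired formula, because for every $f \in C(X)$ and $y \in Y$ one has $(Tf)(y) = \delta_y(Tf) = (T^* \delta_y)(f) = u(y)\, f\ler{\varphi(y)}$.

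It remains to verify the regularity of $\varphi$ and $u$. The map $y \mapsto \delta_y$ is a homeomorphism of $Y$ onto the set of evaluation functionals carrying the weak-$*$ topology, and $T^*$ is weak-$*$ continuous, so $y \mapsto u(y)\, \delta_{\varphi(y)}$ is weak-$*$ continuous; testing against suitable $f \in C(X)$ then recovers the continuity of both $\varphi$ and $u$. Applying the same construction to $(T^{-1})^* = (T^*)^{-1}$ produces a continuous inverse for $\varphi$, so $\varphi$ is a homeomorphism. I expect the main obstacle to be the extreme-point analysis: both directions of the characterization require genuine care, and it is precisely the identification of the dual with $M(X)$, combined with weak-$*$ compactness, that lets the Krein--Milman-type reasoning go through.
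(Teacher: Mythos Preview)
The paper does not prove this theorem; it is merely recalled as a classical result to motivate the later ``Banach--Stone-type'' statements on measure spaces, so there is no argument in the paper to compare against.

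That said, your outline is the standard extreme-point (Arens--Kelley) proof and is correct. The adjoint $T^*$ is a weak-$*$ homeomorphic linear isometry of the dual balls, hence permutes their extreme points; via the Riesz identification $C(X)^*\cong M(X)$ these are exactly the unimodular multiples of point masses, and evaluating $(Tf)(y)=\langle T^*\delta_y,f\rangle$ gives the representation $(Tf)(y)=u(y)\,f(\varphi(y))$. For the regularity step it is cleanest to note first that $u=T\mathbf{1}_X\in C(Y)$, and then that $y\mapsto \delta_{\varphi(y)}=\overline{u(y)}\,T^*\delta_y$ is weak-$*$ continuous, which combined with the fact that $x\mapsto\delta_x$ is a homeomorphism of $X$ onto its image yields continuity of $\varphi$. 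Your closing remark about applying the same construction to $(T^{-1})^*=(T^*)^{-1}$ is the right way to see that $\varphi$ is a bijection with continuous inverse; just make explicit that the resulting map $\psi:X\to Y$ satisfies $\varphi\circ\psi=\mathrm{id}_X$ and $\psi\circ\varphi=\mathrm{id}_Y$ by composing the two representations and comparing supports.
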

Seemingly, this classical result does not have any connection with measures.
\par
Let us remark that the Banach-Stone theorem describes the structure of the surjective linear isometries between unital \emph{commutative} $C^*$-algebras. (By the \emph{Gelfand-Naimark theorem,} any such algebra is isometrically $*$-isomorphic to $C(K)$ for some compact Hausdorff space $K;$ the $*$ operation on $C(K)$ is the pointwise conjugation, that is, $f^*(k)=\overline{f(k)}$ for all $k \in K.$) It states that any surjective linear isometry is necessarily an algebra $*$-isomorphism --- up to multiplication by a fixed function of modulus $1.$
The \emph{Kadison theorem} is a generalization of the Banach-Stone theorem for \emph{not necessarily commutative} unital $C^*$-algebras. It says that a surjective linear isometry between unital $C^*$-algebras can be obtained as a \emph{Jordan $*$-isomorphism} multiplied by a fixed unitary element. (A Jordan $*$-isomorphism is a bijective linear map $J$ that respects the $*$ operation and preserves the square, that is, $J\ler{a^2}=J(a)^2$ for all $a.$)
\par
There are several results in the large area of preserver problems which state that "any isometry between certain extra structures built on sets (say, function spaces, measure spaces, etc.) is necessarily driven by some sufficiently nice transformation between the underlying sets". Such results are called Banach-Stone-type theorems for obvious reasons.

\subsection{Banach-Stone-type results on isometries of measure spaces}
In this subsection we recall some recent Banach-Stone-type results concerning measure spaces.

\subsubsection{Kolmogorov-Smirnov isometries}
The first non-classical result that we recall here is the theorem of Dolinar and Moln\'ar on the isometries of the space of probability distributions on the real line with respect to the \emph{Kolmogorov-Smirnov metric}. The Kolmogorov-Smirnov distance of the Borel probability measures $\mu, \nu \in P(\R)$ is defined by the formula
\be \label{eq:KS-def}
d_{KS}\ler{\mu, \nu}:=\norm{F_\mu-F_\nu}_\infty=\sup_{x \in \R}\abs{F_\mu(x)-F_\nu(x)},
\ee
where $F_\eta$ stands for the cumulative distribution function of the measure $\eta$ for any $\eta \in P(\R),$ that is, $F_\eta(x)=\eta\ler{(-\infty, x]}.$
\par
Note that the Kolmogorov-Smirnov distance is closely related to the  total variation distance (introduced in Example \ref{ex:w1-tv}) and the $1$-Wasserstein distance on $P_1(\R)$ (see Example \ref{ex:w1-kant}). It is clear by the comparison of the formulas \eqref{eq:tv-def} and \eqref{eq:KS-def} that $d_{KS}\ler{\mu, \nu} \leq d_{TV} \ler{\mu, \nu}$ always holds, and by the comparison of the formulas \eqref{eq:w1r-ekv} and \eqref{eq:KS-def} one may observe that the $1$-Wasserstein distance is just the $L_1$ distance of the distribution functions while the Kolmogorov-Smirnov distance is the $L_\infty$ distance of them. The theorem of Dolinar and Moln\'ar reads as follows.
\begin{theorem}[\cite{dolinar-molnar}] \label{thm:dol-moln}
Let $\phi: P(\R) \rightarrow P(\R)$ be a surjective Kolmogorov-Smirnov isometry, that is, a bijection on $P(\R)$ with the property that
$$
d_{KS}\ler{\phi(\mu),\phi(\nu)}=d_{KS} \ler{\mu, \nu} \qquad \ler{\mu, \nu \in P(\R)}.
$$
Then either there exists a strictly increasing bijection $\psi: \R \rightarrow \R$ such that
\be \label{eq:ks-opp-1}
F_{\phi(\mu)}(t)=F_\mu\ler{\psi(t)} \qquad \ler{t \in \R, \mu \in P(\R)},
\ee
or there exits a strictly decreasing bijection $\tilde{\psi}: \R \rightarrow \R$ such that
\be \label{eq:ks-opp-2}
F_{\phi\ler{\mu}}(t)=1-F_{\mu}\ler{\tilde{\psi}(t)-} \qquad \ler{t \in \R, \mu \in P(\R)},
\ee
where $F_\eta(x-)$ denotes the left limit of the distribution function $F_\eta$ at the point $x$ --- note that $F_\eta(x-)=\eta\ler{\ler{-\infty,x}}.$ 
\par
Moreover, any transformation of the form \eqref{eq:ks-opp-1} or \eqref{eq:ks-opp-2} is a surjective Kolmogorov-Smirnov isometry.
\end{theorem}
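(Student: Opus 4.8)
The plan is to translate everything into the language of cumulative distribution functions and then to reconstruct the reparametrization from the metric alone. Writing $\mathcal F$ for the set of all CDFs $F_\mu$ $\ler{\mu \in P(\R)}$, the assignment $\mu \mapsto F_\mu$ is a bijection onto $\mathcal F$ which, by \eqref{eq:KS-def}, carries $d_{KS}$ to the supremum metric $\norm{F-G}_\infty$. Thus a surjective Kolmogorov--Smirnov isometry $\phi$ is the same as a surjective isometry $\Phi$ of $\ler{\mathcal F, \norm{\cdot}_\infty}$ via $\Phi(F_\mu)=F_{\phi(\mu)}$, and the asserted formulas say exactly that $\Phi$ is a precomposition $F \mapsto F \circ \psi$ (for increasing $\psi$) or its reflected analogue. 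The ``moreover'' part is then immediate: a strictly increasing bijection $\psi$ of $\R$ is automatically continuous, $F\circ\psi$ is again a CDF, and $\norm{F\circ\psi - G\circ\psi}_\infty = \norm{F-G}_\infty$ by the substitution $s=\psi(t)$; the decreasing case is analogous. All the work is in the forward direction.

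First I would record the elementary identity $d_{KS}(\mu,\delta_b)=\max\ler{F_\mu(b-),\,1-F_\mu(b)}$ for $b \in \R$, obtained by splitting the supremum in \eqref{eq:KS-def} at $b$, and note that $\mathrm{diam}\,\mathcal F = 1$. The key step is an intrinsic, hence isometry--invariant, metric description of the Dirac masses: I claim $\mu$ is a Dirac measure if and only if there do \emph{not} exist $\nu_1 \neq \nu_2$ with $d_{KS}(\nu_1,\nu_2)=1$ and $d_{KS}(\mu,\nu_1)=d_{KS}(\mu,\nu_2)=\tfrac12$. Indeed, by the identity above, for $\mu=\delta_a$ the two equalities force each $\nu_i$ to carry at most half its mass on either side of $a$, whereas $d_{KS}(\nu_1,\nu_2)=1$ forces $\nu_1,\nu_2$ to be separated by a common threshold; a short case analysis shows these are incompatible. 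Conversely, for a non-Dirac $\mu$ one exhibits such a pair explicitly by splitting $\mu$ at a median into its normalized left and right restrictions (adjusting for an atom at the median if necessary). Granting this, $\Phi$ maps the set of Diracs bijectively onto itself, so there is a bijection $\sigma:\R\to\R$ with $\Phi(\delta_a)=\delta_{\sigma(a)}$.

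Next I would show $\sigma$ is monotone and reconstruct $\Phi$. For $a<c$ the two-atom measure $\tfrac12\ler{\delta_a+\delta_c}$ is at distance $\tfrac12$ from $\delta_b$ precisely when $b\in[a,c]$, and at distance $1$ otherwise; hence the betweenness relation is metrically definable and is preserved by $\Phi$, which forces $\sigma$ to send intervals to intervals and therefore (being a bijection of $\R$) to be strictly monotone, in particular continuous. Put $\psi:=\sigma^{-1}$. Since $\delta_b=\Phi\ler{\delta_{\psi(b)}}$, applying the isometry and the distance formula for Diracs yields, for every $\mu$,
\[
\max\ler{F_{\phi(\mu)}(b-),\,1-F_{\phi(\mu)}(b)} = \max\ler{F_\mu(\psi(b)-),\,1-F_\mu(\psi(b))} \qquad (b \in \R).
\]
The decisive observation is that among CDFs the function $b\mapsto\max\ler{F(b-),1-F(b)}$ determines $F$ uniquely: its minimum locates the median, on either side of which exactly one of the two terms dominates, and the monotonicity of $F$ removes the $F\leftrightarrow 1-F$ ambiguity. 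Comparing the right-hand side with the CDF $F_\mu\circ\psi$ when $\psi$ is increasing, and with its reflection $1-F_\mu(\psi(\cdot)-)$ when $\psi$ is decreasing, gives exactly \eqref{eq:ks-opp-1} and \eqref{eq:ks-opp-2}; the left limits and the $1-(\cdot)$ arise from the orientation reversal and from enforcing right-continuity so as to land back in $\mathcal F$.

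The main obstacle is the Dirac characterization. The Diracs form a continuum-sized equilateral set, all of whose mutual distances equal $1$, so their order is invisible to distances among themselves and must be recovered indirectly through auxiliary measures such as $\tfrac12\ler{\delta_a+\delta_c}$. Establishing the clean metric-invariant criterion above, and carefully handling atoms and left limits when inverting the max-relation, is where the genuine difficulty lies.
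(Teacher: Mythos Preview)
The paper does not prove this theorem; it is quoted from \cite{dolinar-molnar}, and the only information the paper gives about the argument is the key lemma \eqref{eq:bidual}: a measure $\mu$ is a Dirac mass if and only if $U\ler{U\ler{\{\mu\}}}=\{\mu\}$, where $U(S)$ is the set of points at $d_{KS}$--distance $1$ from every element of $S$. Your overall plan---pass to CDFs, characterize Diracs metrically, deduce that $\phi$ permutes the Diracs via some bijection $\sigma$ of $\R$, show $\sigma$ is monotone, and then recover $\Phi$ from the distances $d_{KS}(\mu,\delta_b)$---is indeed the architecture of the original proof. The differences are in the specific metric lemmas you invoke, and there your proposal has a genuine gap.

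Your proposed Dirac characterization is false. You claim that $\mu$ is a Dirac measure if and only if there is no pair $\nu_1\neq\nu_2$ with $d_{KS}(\nu_1,\nu_2)=1$ and $d_{KS}(\mu,\nu_1)=d_{KS}(\mu,\nu_2)=\tfrac12$. The forward direction is fine (the triangle--type estimate you sketch gives $d_{KS}(\nu_1,\nu_2)\le\tfrac12$), but the converse fails for any non--Dirac measure carrying an atom of mass exceeding $\tfrac12$. Take $\mu=\tfrac34\delta_0+\tfrac14\delta_1$. If $d_{KS}(\mu,\nu)\le\tfrac12$ then $F_\nu(t)\in[0,\tfrac12]$ for $t<0$, $F_\nu(t)\in[\tfrac14,1]$ for $0\le t<1$, and $F_\nu(t)\in[\tfrac12,1]$ for $t\ge 1$; hence any two such $\nu_1,\nu_2$ satisfy $\norm{F_{\nu_1}-F_{\nu_2}}_\infty\le\tfrac34<1$. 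So this $\mu$ is not a Dirac, yet no admissible pair exists. Your ``split at a median'' construction breaks down precisely because the large atom sits at the median and the normalized halves do not end up at $d_{KS}$--distance $1$. The characterization \eqref{eq:bidual} used in \cite{dolinar-molnar} avoids this obstruction; you should either adopt it or repair your criterion (for instance by allowing the radius $\tfrac12$ to vary).

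A smaller issue: your betweenness step is stated too loosely. The mere existence of some $\eta$ at distance $\tfrac12$ from $\delta_a,\delta_b,\delta_c$ does \emph{not} detect that $b$ lies between $a$ and $c$ (take $\eta=\tfrac12\delta_b+\tfrac12\delta_c$ with $b<a<c$). A correct metric definition is available---e.g.\ ``for every $\eta$ with $d_{KS}(\eta,\delta_a)=d_{KS}(\eta,\delta_c)=\tfrac12$ one has $d_{KS}(\eta,\delta_b)=\tfrac12$''---but you should say which one you mean and verify it. Once the Dirac characterization and the monotonicity of $\sigma$ are secured, your reconstruction of $F_{\phi(\mu)}$ from the profile $b\mapsto d_{KS}(\phi(\mu),\delta_b)$ is the right idea and matches the spirit of the original proof.
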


Although Theorem \ref{thm:dol-moln} is formulated in terms of distribution functions, it can be easily reformulated in terms of measures as follows: for any surjective Kolmogorov-Smirnov isometry $\phi: P(\R) \rightarrow P(\R)$ there is a homeomorphism $\varphi: \R \rightarrow \R$ such that 
$$
\phi(\mu)=\varphi_\# \mu \qquad \ler{\mu \in P(\R)},
$$
where $\varphi_\#$ is the push-forward induced by $\varphi$ (see Definition \ref{defi:push-forw}). Indeed, if $\phi$ acts on $P(\R)$ such that \eqref{eq:ks-opp-1} holds, then $\varphi=\psi^{-1},$ that is, $\phi=\ler{\psi^{-1}}_\#$ and if $\phi$ acts on $P(\R)$ such that \eqref{eq:ks-opp-2} holds, then then $\varphi=\ler{\tilde{\psi}}^{-1},$ that is, $\phi=\ler{\ler{\tilde{\psi}}^{-1}}_\#.$
\par
The key idea of the result of Dolinar and Moln\'ar is the observation that the Dirac distributions can be characterized in terms of the Kolmogorov-Smirnov metric. To precisely state the characterization we shall introduce the following notation: for a metric space $(Y, \rho)$ and a set $S \subset Y$ let $U(S)$ be defined by
$$
U(S):=\left\{y \in Y \middle| \rho\ler{y, s}=1 \text{ for all } s \in S\right\}.
$$
Now let us consider the special metric space $\ler{P(\R),d_{KS}}.$
The metric characterization of the trivial distributions reads as follows.
\be \label{eq:bidual}
\text{A measure }\mu \in P(\R) \text{ is a Dirac mass }  \Longleftrightarrow U\ler{U\ler{\left\{\mu\right\}}}=\left\{\mu\right\}.
\ee
Such characterizations of Dirac measures will play a crucial role in several following results.
\subsubsection{L\'evy isometries}
The L\'evy distance of the measures $\mu, \nu \in P(\R)$ is defined as follows:
$$
d_{LE} \ler{\mu, \nu}
$$
$$
=\inf \left\{ \varepsilon >0\,\middle|\, \mu\ler{(-\infty, t-\varepsilon]}-\varepsilon \leq \nu\ler{(-\infty, t]} \leq \mu \ler{(-\infty, t+\varepsilon]}+\varepsilon \text{ for all } t \in \R\right\}.
$$
Let us remark that the equivalent definition
$$
d_{LE} \ler{\mu, \nu}
$$
$$
= \sup \left\{ \varepsilon >0\,\middle|\, \nu\ler{(-\infty, t]}+\varepsilon < \mu\ler{(-\infty, t-\varepsilon]} \right.
$$
$$
\left. \text{ or } \nu\ler{(-\infty, t]}-\varepsilon > \mu\ler{(-\infty, t+\varepsilon]} \text{ for some } t \in \R\right\}
$$
offers another viewpoint to understand the L\'evy metric. The importance of the L\'evy distance comes from the fact that (just like some other metrics) it metrizes the topology of weak convergence in $P(\R).$ This type of convergence is of a particular importance in probability theory.
\par
Moln\'ar's theorem reads as follows.

\begin{theorem}[\cite{molnar-levy}] \label{thm:mol-levy}
Let $\phi: P(\R) \rightarrow P(\R)$ be a surjective L\'evy isometry, that is, a bijection on $P(\R)$ with the property that
$$
d_{LE}\ler{\phi(\mu),\phi(\nu)}=d_{LE} \ler{\mu, \nu} \qquad \ler{\mu, \nu \in P(\R)}.
$$
Then there is a constant $c \in \R$ such that either
\be \label{eq:le-iso-opp-1}
F_{\phi(\mu)}(t)=F_\mu\ler{t+c} \qquad \ler{t \in \R, \mu \in P(\R)}
\ee
or 
\be \label{eq:le-iso-opp-2}
F_{\phi\ler{\mu}}(t)=1-F_{\mu}\ler{(-t+c)-} \qquad \ler{t \in \R, \mu \in P(\R)}
\ee
holds.
\par
Moreover, any transformation of any of the forms \eqref{eq:le-iso-opp-1}, \eqref{eq:le-iso-opp-2} is a surjective L\'evy isometry on $P(\R).$
\end{theorem}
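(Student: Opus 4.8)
The plan is to follow the template that the paper has just laid out for the Dolinar--Moln\'ar theorem: first pin down the Dirac masses metrically, then read off a boundary map on $\R$, classify it, and finally show that the whole isometry is determined by its action on the Diracs. Before that, the ``moreover'' part is the easy direction: the two prescriptions \eqref{eq:le-iso-opp-1} and \eqref{eq:le-iso-opp-2} are precisely the push-forwards $\ler{x \mapsto x-c}_\#$ and $\ler{x \mapsto -x+c}_\#$, and one checks straight from the definition of $d_{LE}$ that simultaneously translating both distribution functions by $c$, or replacing $F_\eta(t)$ by $1 - F_\eta\ler{(-t+c)-}$, leaves the defining system of inequalities invariant (a translation in the $t$-variable and the reflection $t \mapsto -t$ both carry the $\varepsilon$-band into an $\varepsilon$-band). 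Hence every such map is a surjective L\'evy isometry.

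For the forward direction I would start from the elementary computation
\be
d_{LE}\ler{\delta_a, \delta_b} = \min\ler{\abs{a-b}, 1},
\ee
so that $x \mapsto \delta_x$ is an isometric embedding of $\ler{\R, \min(\abs{\cdot}, 1)}$ into $\ler{P(\R), d_{LE}}$. The first real task is a metric characterization of the Dirac masses, in the spirit of \eqref{eq:bidual}, that is insensitive to the relabeling by $\phi$; granting such a characterization, $\phi$ restricts to a bijection of the set of Diracs onto itself, giving a surjection $\varphi: \R \rightarrow \R$ with $\phi(\delta_a) = \delta_{\varphi(a)}$ which is an isometry of $\ler{\R, \min(\abs{\cdot},1)}$. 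On scales below $1$ this is just a local Euclidean isometry, so $\varphi$ is locally $x \mapsto \pm x + \text{const}$; connectedness of $\R$ forbids switching orientation, whence $\varphi(x) = x + c$ or $\varphi(x) = -x + c$ globally. This rigidity is exactly where the L\'evy metric differs from the Kolmogorov--Smirnov one, which only sees $\varphi$ as an increasing homeomorphism.

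The final step is to recover the action of $\phi$ on an arbitrary $\mu$ from its action on the Diracs. Here I would compute, directly from the definition,
\be
d_{LE}\ler{\mu, \delta_a} = \max\ler{r(a), \ell(a)},
\ee
where $r(a) = \inf\left\{\varepsilon : F_\mu(a+\varepsilon) \geq 1 - \varepsilon\right\}$ and $\ell(a) = \inf\left\{\varepsilon : F_\mu\ler{(a-\varepsilon)-} \leq \varepsilon\right\}$, so that $r$ is nonincreasing and $\ell$ nondecreasing in $a$, and geometrically $r(a), \ell(a)$ are the first times the graph of $F_\mu$ is met by the two $45^\circ$ lines issued from $(a,1)$ and $(a,0)$. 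The key point is that the family $\left\{d_{LE}(\mu, \delta_a)\right\}_{a \in \R}$ determines $F_\mu$: since every measure lies within L\'evy distance $\fel$ of a Dirac (take $a$ a median of $\mu$), the ``descending'' arc traced by $r$ on the left and the ``ascending'' arc traced by $\ell$ on the right together sweep out the whole graph of $F_\mu$. Because $\phi$ preserves all these distances and sends $\delta_a$ to $\delta_{\varphi(a)}$, the measure $\phi(\mu)$ has the same distance profile as $\mu$ read through $\varphi$; substituting $\varphi(x) = x+c$ or $\varphi(x) = -x+c$ and inverting the above recovery procedure yields exactly \eqref{eq:le-iso-opp-1} or \eqref{eq:le-iso-opp-2}. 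Equivalently, one may first compose $\phi$ with the inverse of the induced push-forward to reduce to the case $\varphi = \mathrm{id}$ and then prove $\phi = \mathrm{id}$.

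I expect the two genuinely delicate points to be, first, the metric characterization of the Diracs: the clean double-$U$ trick \eqref{eq:bidual} does not transfer verbatim, because $d_{LE}(\delta_a, \delta_b) = \abs{a-b} < 1$ for nearby points, so one must isolate the Diracs by a more local extremality property of small balls (the closed ball $\bar B(\delta_a, r)$ consists exactly of the measures with $\mu\ler{[a-r, a+r]} \geq 1 - 2r$). Second, the faithful inversion ``distance profile $\Rightarrow F_\mu$'' must be argued carefully at the atoms of $\mu$ and at the cross-over point where $r(a) = \ell(a)$, where left limits and the $\max$ interact; controlling these is where the bulk of the technical work will lie.
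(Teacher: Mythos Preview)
The paper does not actually prove Theorem~\ref{thm:mol-levy}; it is a survey and merely cites Moln\'ar's result, adding only the one-line remark that ``the key idea is a metric characterization of the Dirac distributions (see equation~\eqref{eq:bidual}), similarly to the proof of the result in \cite{dolinar-molnar}.'' Your outline is fully consistent with that hint and in fact reconstructs the architecture of Moln\'ar's original argument: isolate the Diracs metrically, read off a bijection of $\R$, pin it down as $x\mapsto \pm x+c$ using that on scales below $1$ the L\'evy distance of Diracs is Euclidean, and then recover $F_\mu$ from the profile $a\mapsto d_{LE}(\mu,\delta_a)$.

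Two small corrections are worth flagging. First, your parenthetical description of the closed ball is not quite right: $d_{LE}(\mu,\delta_a)\le r$ is equivalent to the pair of one-sided conditions $F_\mu(a+r)\ge 1-r$ and $F_\mu\ler{(a-r)-}\le r$, not to the single condition $\mu\ler{[a-r,a+r]}\ge 1-2r$ (the latter is implied but not conversely; put the excess mass entirely on one side to see the failure). Second, your worry that ``the clean double-$U$ trick \eqref{eq:bidual} does not transfer verbatim'' is slightly overstated: since $d_{LE}$ is bounded by $1$ and $d_{LE}(\delta_a,\delta_b)=1$ whenever $\abs{a-b}\ge 1$, distance-$1$ pairs do exist, and a bidual-type argument is available, though the precise extremality property Moln\'ar uses is indeed more delicate than in the Kolmogorov--Smirnov case. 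Neither point undermines your strategy; they just sharpen where the technical work sits, exactly as you anticipated.
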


The easy part of Theorem \ref{thm:mol-levy} says that, similarly to the Wasserstein distances, the L\'evy metric has the property that the map $\psi \mapsto \psi_\#$ is a group homomorphism from $\mathrm{Isom} \, \R$ into $\mathrm{Isom} \, P(\R)$ (where the latter group consists of all surjective isometries of $P(\R)$ with respect to the L\'evy metric). The difficult part of Theorem \ref{thm:mol-levy} says that this group homomorphism is in fact onto, hence a group isomorphism.

The key idea is a metric characterization of the Dirac distributions (see equation \eqref{eq:bidual}), similarly to the proof of the result in \cite{dolinar-molnar}.

\subsubsection{Kuiper isometries}

The Kuiper distance of the probability measures $\mu, \nu \in P(\R)$ is given by the formula
\be \label{eq:kui-dist-def}
d_{KU}\ler{\mu, \nu}:=\sup_{I \in \mathcal{I}} \abs{\mu(I)-\nu(I)},
\ee
where $\mathcal{I}=\left\{ I \subset R \, \middle| \, \#I>1 \text{ and }I \text{ is connected}\right\},$ that is, $\mc{I}$ denotes the set of all non-degenerate intervals of $\R.$ It is clear from the definitions that the inequality
$$
0\leq d_{KS}\ler{\mu, \nu} \leq d_{KU}\ler{\mu,\nu} \leq d_{TV}\ler{\mu,\nu} \leq 1 \qquad \ler{\mu, \nu \in P(\R)}
$$
holds (compare the formula \eqref{eq:kui-dist-def} to the formulas \eqref{eq:tv-def} and \eqref{eq:KS-def}).
\par
The theorem of Geh\'er on the isometries of $P(\R)$ with respect to the Kuiper metric reads as follows.

\begin{theorem}[\cite{geher-kuiper}] \label{thm:geher-kuiper}
Let $\phi: P(\R) \rightarrow P(\R)$ be a surjective Kuiper isometry, that is, a bijection on $P(\R)$ with the property that
$$
d_{KU}\ler{\phi(\mu), \phi(\mu)}=d_{KU}\ler{\mu, \nu} \qquad \ler{\mu, \nu \in P(\R)}.
$$
Then there exists a homeomorphism $g: \R \rightarrow \R$ such that
$$
\phi(\mu)=g_\# \mu \qquad \ler{\mu \in P(\R)}.
$$
Moreover, every transformation of this form is a surjective Kuiper isometry on $P(\R).$
\end{theorem}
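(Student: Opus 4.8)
The plan is to mimic the Kolmogorov-Smirnov and L\'evy arguments: give a purely metric description of the Dirac masses in $\ler{P(\R), d_{KU}}$, deduce that every surjective Kuiper isometry permutes them, and then bootstrap from the action on Diracs to the action on all of $P(\R)$. The easy (``moreover'') direction is immediate: a homeomorphism $g \colon \R \to \R$ is a strictly monotone bijection, so $J \mapsto g^{-1}(J)$ permutes the non-degenerate intervals; since $g_\# \mu(J) = \mu\ler{g^{-1}(J)}$, taking the supremum in the definition of $d_{KU}$ gives $d_{KU}\ler{g_\# \mu, g_\# \nu} = d_{KU}\ler{\mu,\nu}$, and $g_\#$ is onto because $g$ is.

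For the hard direction I would first record the distance of an arbitrary $\mu$ to a Dirac mass. Writing $F_\mu$ for the cumulative distribution function and using that $d_{KU}\ler{\mu, \nu}$ equals the oscillation $\sup\ler{F_\mu - F_\nu} - \inf\ler{F_\mu - F_\nu}$ (the Kuiper statistic), a short case analysis over intervals that contain or avoid $x$ yields
\be
d_{KU}\ler{\mu, \delta_x} = 1 - \mu\ler{\left\{x\right\}}.
\ee
The double-dual characterization \eqref{eq:bidual} then follows: $U\ler{\left\{\delta_x\right\}}$ is exactly the set of measures with no atom at $x$, and a measure $\eta$ lies at distance $1$ from all of these if and only if $\eta = \delta_x$ --- if $\eta$ has no atom at $x$ one tests it against itself, while if $0 < \eta\ler{\left\{x\right\}} < 1$ one smears the atom at $x$ over a tiny interval to drop the distance below $1$; the reverse implication, that $U\ler{U\ler{\left\{\mu\right\}}}$ strictly contains $\left\{\mu\right\}$ for non-Dirac $\mu$, is checked in the same spirit. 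As $\phi$ is a surjective isometry it commutes with the operator $U$, hence preserves the class of Dirac masses, and we obtain a bijection $g\colon \R \to \R$ with $\phi\ler{\delta_x} = \delta_{g(x)}$.

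The main obstacle is to climb from this permutation of the Diracs to the full identity $\phi = g_\#$. The subtlety is that, by the displayed formula, all Diracs are at mutual distance $1$, so their configuration carries no order or topological information --- distances to Diracs only detect atoms. To recover the order of $\R$ I would pass to the equal-weight two-point measures $\frac{1}{2}\ler{\delta_a + \delta_b}$, which are again metrically distinguished, and compute that $d_{KU}\ler{\frac{1}{2}\ler{\delta_a+\delta_b}, \frac{1}{2}\ler{\delta_c + \delta_d}}$ equals $\frac{1}{2}$ exactly when the pairs $\left\{a,b\right\}$ and $\left\{c,d\right\}$ interleave, and equals $1$ when they are nested or disjoint. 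This interleaving relation is metric, hence preserved by $\phi$; together with atom preservation it forces $\phi\ler{\frac{1}{2}\ler{\delta_a+\delta_b}} = \frac{1}{2}\ler{\delta_{g(a)} + \delta_{g(b)}}$ and shows that $g$ is monotone, therefore a homeomorphism.

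Finally I would set $\psi := \ler{g^{-1}}_\# \circ \phi$, a surjective Kuiper isometry fixing every Dirac mass and, by the previous step, every equal-weight two-point mass. The formula $d_{KU}\ler{\mu,\delta_x} = 1 - \mu\ler{\left\{x\right\}}$ shows $\psi$ preserves all atoms, so once the two-point computation is extended to arbitrary finite convex combinations $\sum_i t_i \delta_{x_i}$ the map $\psi$ is the identity on finitely supported measures. Since $d_{KU} \leq 2\, d_{KS}$ and step functions approximate cumulative distribution functions uniformly, these measures are dense in $\ler{P(\R), d_{KU}}$, and a continuous isometry fixing a dense set is the identity; thus $\psi = \mathrm{id}$ and $\phi = g_\#$. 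I expect the genuinely delicate work to lie in the order-recovery step and in propagating the fixed-point property from two-point masses to all finitely supported measures, after which the density argument is routine.
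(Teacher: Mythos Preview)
The paper does not contain a proof of this theorem. Theorem~\ref{thm:geher-kuiper} appears in the survey portion of the paper as a result quoted from \cite{geher-kuiper}; it is stated and attributed, but no argument (not even a sketch) is given. There is therefore nothing in the paper to compare your proposal against.

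As a standalone outline your plan is reasonable and follows the template of \cite{dolinar-molnar} and \cite{molnar-levy}: characterise the Diracs metrically, show the isometry permutes them via some bijection $g$, recover enough order information to see that $g$ is monotone (hence a homeomorphism), and then pass to general measures by density. A few places deserve more care if you intend to turn this into a proof. First, the bidual characterisation \eqref{eq:bidual} in the Kuiper case needs the reverse implication spelled out: for a non-Dirac $\mu$ you must actually exhibit some $\eta\neq\mu$ lying in $U\ler{U\ler{\{\mu\}}}$, and ``in the same spirit'' hides a genuine computation (for instance, when $\mu=\tfrac12(\delta_0+\delta_1)$ one can check that every $\tfrac{t}{}\!$-weighted combination $t\delta_0+(1-t)\delta_1$ also lies in the bidual). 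Second, your two-point distance dichotomy is correct for four distinct points, but the passage from ``$g$ preserves interleaving of pairs'' to ``$g$ is monotone'' is a combinatorial lemma that should be stated and proved, not asserted. Third, as you yourself flag, propagating the identity from equal-weight two-point masses to arbitrary finitely supported measures is where the real work lies; the atom-preservation formula $d_{KU}\ler{\mu,\delta_x}=1-\mu(\{x\})$ pins down the atoms of $\psi(\mu)$ but not their locations' order relative to the rest of the mass, and one needs further metric invariants to finish. The density step at the end is indeed routine once those points are settled.
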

\subsubsection{L\'evy-Prokhorov isometries}

As mentioned before, the L\'evy distance is an important metric on $P(\R)$ as it metrizes the weak convergence in $P(\R).$
In $1956$ \emph{Prokhorov} introduced a metric which metrizes the weak convergence in $P(X)$ for a general Polish metric space $(X,d)$ \cite{prokhorov}. Now we call this metric \emph{L\'evy-Prokhorov distance} although it does not coincide with the L\'evy metric in the special case $X=\R.$ The L\'evy-Prokhorov distance is defined as follows:
$$
d_{LP}\ler{\mu, \nu}=\inf \left\{ \varepsilon>0 \, \middle| \, \mu(A) \leq \nu\ler{A^{\varepsilon}}+\varepsilon \text{ for all } A \in \cB_X\right\},
$$
where
$$
A^{\varepsilon}=\bigcup_{x \in A} B_\varepsilon(x) \text{ and } B_\varepsilon(x)=\left\{y \in X \,\middle|\, d(x,y)<\varepsilon \right\}.
$$
Geh\'er and Titkos considered the problem of determining the L\'evy-Prokhorov isometries of $P(X)$ in the case when $X$ is a separable real Banach space which is a bit less general setting than the setting of Polish metric spaces (which is the most general possible setting) \cite{geher-titkos}. Their result reads as follows.

\begin{theorem}[\cite{geher-titkos}] \label{thm:geh-titk}
Let $\ler{X,\norm{\cdot}}$ be a separable real Banach space and let $\phi: P(X) \rightarrow P(X)$ be a surjective L\'evy-Prokhorov isometry, that is, assume that
$$
d_{LP}\ler{\phi(\mu), \phi(\mu)}=d_{LP}\ler{\mu, \nu} \qquad \ler{\mu, \nu \in P(X)}
$$
holds. Then there exists a surjective affine isometry $\psi: X \rightarrow X$ which induces $\phi,$ that is, we have
\be \label{eq:lev-prok-thm}
\phi(\mu)=\psi_\# \mu \qquad \ler{\mu \in P(X)}.
\ee
Moreover, any transformation of the form \eqref{eq:lev-prok-thm} is a surjective L\'evy-Prokhorov isometry.
\end{theorem}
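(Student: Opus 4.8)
The plan is to follow the three-stage scheme that underlies all the Banach-Stone-type results recalled above, the engine being a purely metric characterization of the Dirac measures in the spirit of \eqref{eq:bidual}. I would dispose of the easy ``moreover'' direction first: if $\psi \colon X \to X$ is a surjective affine isometry, then $\psi$ carries $\varepsilon$-neighbourhoods to $\varepsilon$-neighbourhoods, $\psi\ler{A^\varepsilon}=\ler{\psi(A)}^\varepsilon$, and since $\psi_\# \mu(B)=\mu\ler{\psi^{-1}(B)}$, substituting $B=\psi(A)$ into the defining inequality $\mu(A)\le \nu\ler{A^\varepsilon}+\varepsilon$ shows $d_{LP}\ler{\psi_\# \mu, \psi_\# \nu}=d_{LP}\ler{\mu,\nu}$; thus every such $\psi_\#$ is a surjective L\'evy-Prokhorov isometry. (By the Mazur-Ulam theorem a surjective isometry of $X$ is automatically affine, so ``affine'' is no genuine restriction on the ambient symmetries.) The substance is the converse, for which I would proceed in three steps.

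First, I would give a metric characterization of the Dirac masses. A direct computation from the definition yields $d_{LP}\ler{\delta_x,\delta_y}=\min\ler{\norm{x-y},1}$: the binding constraint reduces to $\delta_x\ler{\left\{x\right\}}\le \delta_y\ler{B_\varepsilon(x)}+\varepsilon$, which fails precisely when $\varepsilon<\norm{x-y}$ and $\varepsilon<1$. More importantly, one shows that $\mu \in P(X)$ is a Dirac mass if and only if it satisfies a condition expressible solely through $d_{LP}$, for instance $U\ler{U\ler{\left\{\mu\right\}}}=\left\{\mu\right\}$ with $U$ as in \eqref{eq:bidual}; intuitively, Dirac masses are the measures that are rigidly pinned down by the family of measures lying at distance exactly $1$ from them. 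Granting such a characterization and applying it to both $\phi$ and $\phi^{-1}$ (both surjective isometries, hence both preserving the characterized set), $\phi$ restricts to a bijection of the Dirac masses, so there is a bijection $\psi\colon X\to X$ with $\phi\ler{\delta_x}=\delta_{\psi(x)}$, and the identity $\min\ler{\norm{\psi(x)-\psi(y)},1}=\min\ler{\norm{x-y},1}$ holds for all $x,y$.

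Second, I would upgrade $\psi$ to a surjective affine isometry. From the truncated-distance identity, $\psi$ preserves distances whenever $\norm{x-y}<1$ and is a bijection. To pass from this local isometry to a global one I would exploit the geodesic structure of the Banach space: for arbitrary $x,y$ choose equally spaced points $x=z_0,z_1,\dots,z_n=y$ on the segment $[x,y]$ with consecutive gaps below $1$, so that $\sum_i \norm{z_i-z_{i-1}}=\norm{x-y}$, and argue that $\psi$ preserves metric betweenness (encoded by the equality case of the triangle inequality) to conclude $\norm{\psi(x)-\psi(y)}=\norm{x-y}$. Then the Mazur-Ulam theorem forces $\psi$ to be affine, and $\psi_\#$ is a well-defined surjective L\'evy-Prokhorov isometry agreeing with $\phi$ on every Dirac mass.

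Finally, I would show $\phi=\psi_\#$ on all of $P(X)$. Passing to $\Phi:=\ler{\psi^{-1}}_\# \circ \phi$, we obtain a surjective $d_{LP}$-isometry that fixes every Dirac mass, and it suffices to prove $\Phi=\mathrm{id}$. Since $\Phi$ fixes each $\delta_x$ and preserves $d_{LP}$, we have $d_{LP}\ler{\Phi(\mu),\delta_x}=d_{LP}\ler{\mu,\delta_x}$ for every $x$; a short computation identifies this profile as $d_{LP}\ler{\mu,\delta_x}=\inf\left\{\varepsilon>0 \,\middle|\, \mu\ler{B_\varepsilon(x)}\ge 1-\varepsilon\right\}$, i.e. the concentration function of $\mu$. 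The heart of the matter, and the step I expect to be the main obstacle, is to show that this family of functionals --- possibly together with distances to a larger metrically characterized family, such as suitable finitely supported reference measures --- determines $\mu$ uniquely. I would reduce to finitely supported measures, which are $d_{LP}$-dense in $P(X)$ by separability, invoke continuity of the isometry $\Phi$, and then recover the atoms and their weights of such a measure from its concentration profile relative to the (now fixed) Diracs. Carrying out this reconstruction rigorously --- controlling the interaction of nearby atoms and ruling out distinct measures sharing identical concentration data --- is the delicate part on which the whole argument turns.
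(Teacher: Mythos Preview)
The paper does not contain a proof of this theorem: it is quoted from \cite{geher-titkos} as part of the literature survey, and the only information the present paper offers about the argument is the remark that the bidual characterization \eqref{eq:bidual} ``plays an important role in the proof of the result of Geh\'er and Titkos, as well'' and that ``the general setting of separable real Banach spaces required the development of other involved techniques.'' There is therefore no in-paper proof to compare your proposal against.

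That said, your three-stage plan is consistent with the hint the paper gives: a metric characterization of Dirac masses in the spirit of \eqref{eq:bidual}, an upgrade of the induced bijection $\psi$ on $X$ to a genuine surjective isometry (hence affine by Mazur--Ulam), and then the identification $\phi=\psi_\#$ on all of $P(X)$. Two comments. First, in Step~2 your chain argument can be made to work more simply than via betweenness: from $\norm{\psi(z_i)-\psi(z_{i-1})}=\norm{z_i-z_{i-1}}$ along a fine subdivision you get $\norm{\psi(x)-\psi(y)}\le\norm{x-y}$ by the triangle inequality, and the reverse inequality follows by applying the same to $\psi^{-1}$; invoking metric betweenness in a general (possibly non-strictly-convex) Banach space is more delicate than you suggest, since geodesics need not be unique. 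Second, you are right that Step~3 is where the substance lies, and you do not actually prove it: knowing $d_{LP}\ler{\Phi(\mu),\delta_x}=d_{LP}\ler{\mu,\delta_x}$ for all $x$ is far from obviously sufficient to pin down $\mu$, and the ``other involved techniques'' alluded to in the paper presumably refer precisely to what is needed here. Your proposal is an honest outline, but it is a plan rather than a proof, with the decisive step left open.
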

Similarly to the case of the L\'evy distance, we learned that the map $\psi \mapsto \psi_\#$ is a group homomorphism from $\mathrm{Isom} \, X$ into $\mathrm{Isom} \, P(X)$ (easy), and that this homomorphism is actually onto (difficult).
\par
The observation \eqref{eq:bidual} plays an important role in the proof of the result of Geh\'er and Titkos, as well. However, the general setting of separable real Banach spaces required the development of other involved techniques. 
\subsubsection{$2$-Wasserstein isometries on negatively curved spaces}

We have noted before that if we consider Wasserstein distances on $P_p(X)$ for a Polish space $X,$ then the push-forward of measures by an isometry of $X$ is always a $p$-Wasserstein isometry on $P_p(X),$ no matter what the value of the parameter $p$ is. (See equation \eqref{eq:hashtag}).
\par
The question naturally appears: are there isometries of $P_p(X)$ that can not be obtained this way? In other words: are there non-trivial isometries of $P_p(X)$? (Following the terminology of \cite{bertrand-kloeckner-2016}, we call a $p$-Wasserstein isometry $\phi$ of $P_p(X)$ \emph{trivial} if $\phi=\psi_\#$ for some isometry $\psi: X \rightarrow X.$ Moreover, an isometry $\phi$ is called \emph{shape-preserving} if for any $\mu \in P_p(X)$ there exists an isometry $\psi_\mu: X \rightarrow X$ such that $\phi (\mu)=\ler{\psi_\mu}_{\#}\mu.$ The isometries that are not even shape-preserving are called \emph{exotic} isometries.)
\par
The result of Bertrand and Kloeckner states that if $X$ is a negatively curved space, then all the $2$-Wasserstein isometries of $P_2(X)$ are trivial \cite{bertrand-kloeckner-2016}. In other words, the measure space $P_2(X)$ is \emph{isometrically rigid.}
\par
The precise statement reads as follows.

\begin{theorem}[\cite{bertrand-kloeckner-2016}] \label{thm:bert-kloeck}
Let $X$ be a negatively curved geodesically complete Hadamard space. Let $\phi: P_2(X) \rightarrow P_2(X)$ be a $2$-Wasserstein isometry, that is, assume that
$$
W_2 \ler{\phi(\mu), \phi(\mu)}=W_2 \ler{\mu, \nu} \qquad \ler{\mu, \nu \in P_2(X)}.
$$
Then there is an isometry $\psi: X \rightarrow X$ such that
$$
\phi(\mu)=\psi_\# \mu \qquad \ler{\mu \in P_2(X)}.
$$
\end{theorem}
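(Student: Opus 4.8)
The plan is to follow the two-stage pattern shared by the Banach-Stone-type results recalled above: first prove that $\phi$ must carry the set of Dirac masses $\Delta:=\{\delta_x : x\in X\}$ onto itself, and then upgrade this to the statement that $\phi$ is globally a push-forward.

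\emph{Stage one: a metric characterization of the Dirac masses.} Since $W_2(\delta_x,\delta_y)=d(x,y)$, the assignment $x\mapsto\delta_x$ identifies $X$ isometrically with $\Delta\subset P_2(X)$, so it suffices to single out $\Delta$ by a property formulated purely in terms of the metric $W_2$; such a property is then automatically preserved by $\phi$. I would isolate the Diracs through the geodesic geometry of $P_2(X)$: over a Hadamard space every $W_2$-geodesic comes from displacement interpolation, and $\delta_x$ is distinguished by the extremal rigidity of the geodesics issuing from it. Exploiting geodesic completeness of $X$ to prolong geodesics indefinitely, and negative curvature to control their divergence, one shows that both the infinitesimal (tangent-cone) and the asymptotic (boundary-at-infinity) structure of $P_2(X)$ are extremal precisely at the points of $\Delta$. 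Granting such a characterization, $\phi(\Delta)=\Delta$, and the restriction $\phi|_\Delta$ is an isometry of $\Delta\cong X$, hence is induced by some $\psi\in\mathrm{Isom}\,X$; that is, $\phi(\delta_x)=\delta_{\psi(x)}$ for every $x$.

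\emph{Stage two: rigidity.} Composing $\phi$ with $(\psi^{-1})_\#$, which is a $W_2$-isometry by \eqref{eq:hashtag}, I may assume that $\phi$ fixes every Dirac mass, so the task reduces to proving $\phi=\mathrm{id}$. Isometry together with $\phi(\delta_x)=\delta_x$ forces $W_2(\phi(\mu),\delta_x)=W_2(\mu,\delta_x)$, i.e.\ $\int_X d(x,y)^2\,\dd\mu(y)=\int_X d(x,y)^2\,\dd(\phi(\mu))(y)$ for every $\mu\in P_2(X)$ and every $x\in X$: the measures $\mu$ and $\phi(\mu)$ share the same second moment about each point of $X$. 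The entire weight of the theorem now rests on the purely geometric assertion that, \emph{in a negatively curved space}, this family of data already pins down the measure, whence $\phi(\mu)=\mu$. I would establish this by tracking the distances $W_2(\mu,\delta_x)$ as $x$ escapes to the boundary at infinity $\partial_\infty X$ along geodesic rays: since geodesics diverge exponentially in negative curvature, distinct measures leave distinguishable asymptotic imprints on the totality of Dirac masses.

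I expect \emph{stage two} to be the true obstacle, with negative curvature indispensable rather than a mere technical convenience. This is confirmed by the flat model $X=\R^n$, where the corresponding rigidity simply fails: the second moment $x\mapsto\int_X d(x,y)^2\,\dd\mu(y)$ then records only the barycenter and the variance of $\mu$, so two measures agreeing in these two quantities have identical distances to all Diracs --- and indeed Kloeckner exhibited shape-preserving and even exotic isometries of $P_2(\R^n)$ \cite{Kloeckner-2008}. The failure is traceable to parallel geodesics remaining at bounded distance, so that $\partial_\infty\R^n$ is too thin to separate such measures. Strict negative curvature replaces this bounded divergence by exponential divergence, which is exactly what makes the boundary data determining; geodesic completeness of the Hadamard space ensures, meanwhile, that all the relevant rays (and their Wasserstein analogues) can be prolonged without end.
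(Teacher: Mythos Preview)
The paper does not prove Theorem~\ref{thm:bert-kloeck}. It appears in the survey section ``Isometries of measure spaces: an overview of the literature'' and is merely \emph{stated}, with attribution to \cite{bertrand-kloeckner-2016}, as background for the paper's own (much more modest) result on $P\ler{S^{n-1}}$. No argument is offered, not even a sketch. There is therefore no ``paper's own proof'' to compare your attempt against.

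For what it is worth, the architecture of your outline is the standard one and does match, in broad strokes, how Bertrand and Kloeckner organize \cite{bertrand-kloeckner-2016}: first pin down the Diracs by a metric characterization so that $\phi|_\Delta$ is induced by some $\psi\in\mathrm{Isom}\,X$, then reduce to showing that an isometry fixing every Dirac is the identity. Your diagnosis of where the difficulty lies is also correct: the reduction leaves you with the equality $\int_X d(x,y)^2\,\dd\mu(y)=\int_X d(x,y)^2\,\dd(\phi(\mu))(y)$ for all $x$, and the whole content of the theorem is that in strictly negative curvature this family of moments determines $\mu$, whereas in $\R^n$ it records only barycenter and variance --- whence Kloeckner's nontrivial isometries. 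What you have written for Stage two, however, is a heuristic rather than a proof: ``tracking $W_2(\mu,\delta_x)$ as $x$ escapes to $\partial_\infty X$'' and ``distinct measures leave distinguishable asymptotic imprints'' names the phenomenon without supplying any mechanism. In the original paper this step is precisely where the work is, and it requires genuine input from comparison geometry (Busemann functions, the visual boundary, and the rigidity that strict negative curvature buys over mere nonpositive curvature). So your proposal is a plausible road map to the result, but the hard step is asserted rather than argued.
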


Note that with the terminology borrowed from \cite{bertrand-kloeckner-2016} the results of \cite{molnar-levy} and \cite{geher-titkos} can be rephrased as follows: the measure space $P(\R)$ equipped with the L\'evy metric is isometrically rigid, and the measure space $P(X)$ for a real separable Banach space $X$ equipped with the L\'evy-Prokhorov metric is also isometrically rigid.  
\subsection{Non-Banach-Stone-type results on isometries of measure spaces}

Quite surprisingly, the probability measures on Euclidean spaces have non-trivial $2$-Wasserstein isometries, as well. Furthermore, in the special case of the real line we have also exotic isometries (recall that exotic means that it does not preserve the shape of the measures).
\par
The precise statements of Kloeckner about the isometries of $P_2$ spaces over Euclidean spaces read as follows.

\subsubsection{The case of the real line}
\begin{theorem}[\cite{Kloeckner-2008}] \label{thm:kloeck-euk}
The isometry group of the space $P_2(\R)$ with respect to the $2$-Wasserstein metric is a semidirect product
\be \label{eq:w2r-isom-gr}
\mathrm{Isom} \, \R \ltimes \mathrm{Isom} \, \R.
\ee
In \eqref{eq:w2r-isom-gr} the left factor is the image of $\#$ (recall that $\#$ was introduced in \eqref{eq:hashtag}) and the right factor consists of all isometries that fix pointwise the set of Dirac measures. Moreover, the right factor decomposes as $\mathrm{Isom} \, \R= C_2 \ltimes \R,$ where the $C_2$ factor (the group of order $2$) is generated by a non-trivial involution that preserve shapes and the $\R$ factor is a flow of exotic isometries. 
\end{theorem}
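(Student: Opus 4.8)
The plan is to realize $P_2(\R)$ as a convex subset of a Hilbert space, where $W_2$ becomes an honest $L^2$-distance, and then to read the isometry group off the geometry of that subset. For $\mu \in P_2(\R)$ the \emph{quantile function} $Q_\mu := F_\mu^{-1}$ (the generalized inverse of the distribution function) lies in $L^2(0,1)$, and the one-dimensional analogue of Example \ref{ex:w1-kant} gives
\be
W_2\ler{\mu,\nu}^2 = \int_0^1 \abs{Q_\mu(t)-Q_\nu(t)}^2 \dd t = \norm{Q_\mu - Q_\nu}_{L^2(0,1)}^2 .
\ee
Thus $\mu \mapsto Q_\mu$ is an isometric embedding of $P_2(\R)$ onto the convex set $\mc Q \subset L^2(0,1)$ of non-decreasing functions. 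Under this identification $\delta_x \mapsto x\,\mathbf 1$, so the Dirac measures form the line $\Lambda$ of constants; splitting $L^2(0,1) = \R\mathbf 1 \oplus H$ with $H = \mathbf 1^{\perp}$ separates each measure into its barycentre $\inner{Q_\mu}{\mathbf 1}$ and its centred \emph{shape} $Q_\mu - \inner{Q_\mu}{\mathbf 1}\,\mathbf 1$, and the shapes sweep out the pointed convex cone $\mc C_0 \subset H$ of centred non-decreasing functions.

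Next I would establish the semidirect structure. The crucial step is a metric characterisation of the Diracs: every $W_2$-isometry preserves $\Lambda$. A convenient route notes that the only complete (bi-infinite) geodesic lines of $\mc Q$ are the mean-translation lines $s \mapsto \ler{x \mapsto x+s}_\# \mu$, because a straight line $Q + s v$ remains in $\mc Q$ for all $s \in \R$ exactly when the direction $v$ is constant (otherwise monotonicity fails for large $\abs{s}$ of one sign). These lines are mutually parallel and their parameter space, with the parallel distance, is isometric to $\ler{\mc C_0, \norm{\cdot}}$, in which $\Lambda$ is the apex; since $\mc C_0$ is pointed this apex is metrically distinguished, so any isometry fixes $\Lambda$ setwise. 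Restriction to $\Lambda \cong \ler{\R,\abs{\cdot}}$ then yields a homomorphism $r : \mathrm{Isom}\, P_2(\R) \to \mathrm{Isom}\, \R$, and since $\psi_\#$ acts on $\Lambda$ as $\psi$ we have $r \circ \# = \mathrm{id}$ (with $\#$ as in \eqref{eq:hashtag}). Hence $\#$ splits $r$, giving the decomposition \eqref{eq:w2r-isom-gr} with image of $\#$ as the left factor and $N := \ker r$ as the normal right factor of isometries fixing every Dirac.

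It remains to identify $N$. If $\phi \in N$, then comparing $W_2\ler{\mu,\delta_x}^2 = \ler{m_\mu - x}^2 + \norm{g_\mu}^2$ (with $m_\mu, g_\mu$ the barycentre and shape) for all $x$ forces $\phi$ to preserve barycentres and shape-norms, and a short nearest-point argument between the parallel fibres shows $\phi$ acts by one fixed apex-fixing isometry $T$ of $\ler{\mc C_0, \norm{\cdot}}$ on every fibre; conversely each such $T$ defines an element of $N$. So $N$ is isomorphic to the apex-fixing isometry group of $\mc C_0$. Two families are visible: the linear involution $\iota : g(t) \mapsto -g(1-t)$ on $H$, which on measures is reflection of each $\mu$ about its own barycentre — shape-preserving, yet not of the form $\psi_\#$ since it fixes all Diracs — generating a $C_2$; and a one-parameter group $\ler{\Phi_s}_{s \in \R}$ of \emph{exotic} isometries of $\mc C_0$, which fail to preserve shapes and hence are not restrictions of affine isometries of $L^2(0,1)$. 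One then checks from the $L^2$-formula that each $\Phi_s$ is an apex-fixing isometry, that $\Phi_s \Phi_{s'} = \Phi_{s+s'}$, and that $\iota \Phi_s \iota = \Phi_{-s}$, reproducing the relations of $C_2 \ltimes \R \cong \mathrm{Isom}\,\R$.

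The main obstacle is this last step, in two respects. First, \emph{constructing} the exotic flow: because it can arise from no point transformation of $\R$, it must be exhibited as a genuinely nonlinear symmetry of $\mc C_0$, and producing a one-parameter group that simultaneously respects the monotonicity constraint and the $L^2$-metric is delicate. Second, and harder, proving \emph{exhaustiveness} — that $C_2 \ltimes \R$ is \emph{all} of $N$. This is the rigidity core: one must show that an arbitrary apex-fixing isometry of the pointed cone $\mc C_0$ is forced, by the interplay between the Hilbert metric and the monotonicity constraint defining $\mc C_0$, to coincide with some $\iota^{\eps}\Phi_s$ with $\eps \in \{0,1\}$. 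I expect this classification of the cone's isometries to absorb essentially all of the real work.
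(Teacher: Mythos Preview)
The paper does not prove this theorem. Theorem~\ref{thm:kloeck-euk} is stated in the survey portion of the paper as a result quoted from Kloeckner~\cite{Kloeckner-2008}, with no proof given; the text immediately following it even says explicitly that ``the description of the exotic isometries is beyond the scope of this paper, we refer to the original work of Kloeckner.'' So there is no proof here against which to compare your proposal.

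That said, your outline is broadly aligned with Kloeckner's actual argument in \cite{Kloeckner-2008}: the quantile-function isometry $\mu\mapsto Q_\mu$ onto the cone of non-decreasing functions in $L^2(0,1)$ is precisely the device he uses, and the decomposition into barycentre plus centred shape, together with the observation that the only complete geodesic lines are the mean-translation lines, is exactly how he isolates the Dirac line and obtains the split short exact sequence yielding \eqref{eq:w2r-isom-gr}. Your identification of $\iota$ as the shape-preserving involution is also correct.

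Where your proposal stops short is where you say it does. You have not constructed the exotic flow $\ler{\Phi_s}_{s\in\R}$, and you have not proved exhaustiveness of $C_2\ltimes\R$ inside $N$. In Kloeckner's paper the exotic flow is written down explicitly (it acts nonlinearly on quantile functions via a rescaling that mixes the variance into the parametrisation), and the rigidity step --- that every apex-fixing isometry of the cone $\mc C_0$ lies in the group generated by $\iota$ and the $\Phi_s$ --- is indeed the substantive part of the proof. Your sketch correctly locates the difficulty but does not resolve it, so as it stands the proposal is an accurate roadmap rather than a proof.
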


The question naturally appears: how do the elements of the right factor of \eqref{eq:w2r-isom-gr} look like? That is, how does a $2$-Wasserstein isometry that fixes all Dirac measures look like? The description of the exotic isometries is beyond the scope of this paper, we refer to the original work of Kloeckner \cite{Kloeckner-2008}. However, the description of the non-trivial but still shape-preserving isometries is easy; the reader will find it in the explanation of Theorem \ref{thm:kloeck-euk-n}, because the behavior of the shape-preserving isometries is independent of the dimension of the underlying Euclidean space. Keep in mind that $C_2=O(1).$

\subsubsection{The case of $\R^n$ for $n \geq 2$}

\begin{theorem}[\cite{Kloeckner-2008}] \label{thm:kloeck-euk-n}
For $n \geq 2,$ the $2$-Wasserstein isometry group of $P_2\ler{\R^n}$ is a semidirect product
\be \label{eq:iso-r-n}
\mathrm{Isom} \, \R^n \ltimes O(n)
\ee
where the action of an element $T \in \mathrm{Isom} \, \R^n$ on $O(n)$ is the conjugacy by its linear part $\tilde T.$
\par
The left factor in \eqref{eq:iso-r-n} is the image of $\#$ (see \eqref{eq:hashtag}) and each element of the right factor fixes all Dirac measures and preserves shapes.
\end{theorem}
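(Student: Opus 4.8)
The plan is to exhibit the two families of isometries appearing in \eqref{eq:iso-r-n}, check that they generate the stated semidirect product, and then prove that there are no others. For $T\in\mathrm{Isom}\,\R^n$ write $T(x)=\tilde T x+v$ with $\tilde T\in O(n)$; the push-forward $T_\#$ (see \eqref{eq:hashtag}) is the associated trivial isometry. For $R\in O(n)$ I would rotate each measure about its own barycenter: with $b(\mu):=\int_{\R^n}x\,\dd\mu(x)$ and $T_{R,\mu}(x):=R\ler{x-b(\mu)}+b(\mu)$, set $\phi_R(\mu):=\ler{T_{R,\mu}}_\#\mu$. Since $T_{R,\mu}$ fixes the point $b(\mu)$, each $\phi_R$ fixes every Dirac mass and is visibly shape-preserving, matching the description of the right factor.

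First I would record the barycenter splitting. Expanding $\abs{x-y}^2$ around $b(\mu)$ and $b(\nu)$ in the transport integral, the cross term drops out because the centred marginals have vanishing mean, which yields the Pythagorean identity
$$
W_2(\mu,\nu)^2=\abs{b(\mu)-b(\nu)}^2+W_2\ler{\tilde\mu,\tilde\nu}^2,
$$
where $\tilde\mu$ is the translate of $\mu$ with barycenter at the origin. Equivalently, $\mu\mapsto\ler{b(\mu),\tilde\mu}$ identifies $P_2(\R^n)$ isometrically with the $\ell^2$-product $\R^n\times P_2^c(\R^n)$, where $P_2^c(\R^n)$ denotes the centred measures. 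From this identity it is immediate that each $\phi_R$ is an isometry (it preserves barycenters and acts on the centred part by $R_\#$, which preserves $W_2$), that $R\mapsto\phi_R$ is an injective homomorphism of $O(n)$, and --- by a direct computation with $T_{R,\mu}$ --- that $T_\#\,\phi_R\,T_\#^{-1}=\phi_{\tilde T R\tilde T^{-1}}$, which is exactly the conjugation by the linear part asserted in the statement. Hence the two families generate a copy of $\mathrm{Isom}\,\R^n\ltimes O(n)$ inside $\mathrm{Isom}\,P_2(\R^n)$.

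For the converse I would show every isometry lies in this group. In the splitting $\R^n\times P_2^c(\R^n)$ the translations $\left\{\ler{\tau_v}_\#\right\}$ are Clifford translations --- they displace every measure by the same distance $\abs{v}$ --- so in the non-negatively curved space $P_2(\R^n)$ they generate the Euclidean de Rham factor, namely the barycenter $\R^n$, which is therefore preserved by every isometry $\phi$. Thus $\phi$ respects the product, acting as $\ler{\phi_1,\phi_2}$ with $\phi_1\in\mathrm{Isom}\,\R^n$ and $\phi_2\in\mathrm{Isom}\,P_2^c(\R^n)$; and since $P_2^c(\R^n)$ is a metric cone whose canonical apex is $\delta_0$, the map $\phi_2$ fixes $\delta_0$, so $\phi$ carries the Dirac set $\R^n\times\left\{\delta_0\right\}$ onto itself and induces an isometry $T$ of the barycenter space (using $W_2(\delta_x,\delta_y)=\abs{x-y}$). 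Replacing $\phi$ by $T_\#^{-1}\phi$ I may assume $\phi$ fixes every Dirac; a short computation with the Pythagorean identity, comparing distances to the fixed Diracs $\ler{b,\delta_0}$, then forces $\phi$ to preserve barycenters and to act fibrewise by a single apex-fixing isometry $G$ of $P_2^c(\R^n)$, that is, $\phi\ler{a,\alpha}=\ler{a,G\alpha}$.

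Everything thus reduces to the crux: for $n\ge2$, every isometry $G$ of the centred cone $P_2^c(\R^n)$ fixing the apex $\delta_0$ is of the form $R_\#$ with $R\in O(n)$, whence $\phi=\phi_R$. This is the main obstacle, and it is precisely where the hypothesis $n\ge2$ is indispensable: for $n=1$ there are genuinely extra apex-fixing isometries, even exotic ones, which is why Theorem \ref{thm:kloeck-euk} has a different shape. The hard part will be to rule these out in higher dimension. Since $G$ fixes the apex it preserves the distance to $\delta_0$ and is determined by the isometry it induces on the space of directions at $\delta_0$, so the task is to prove that this link is rigid. The approach I would take is to track the second-order data of a centred measure --- its covariance, on which $O(n)$ acts by $\Sigma\mapsto R\Sigma R^{\!\top}$ --- and to show that $G$ must respect a distinguished highly symmetric reference family (e.g.\ centred Gaussians, whose pairwise $W_2$-distances give the Bures--Wasserstein metric, whose apex-fixing isometries reduce to orthogonal congruences), thereby forcing $G$ to be orthogonal. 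Establishing that $G$ indeed preserves such a reference family, and hence acts as a genuine rotation rather than some exotic bijection, is the dimension-dependent technical heart of the argument.
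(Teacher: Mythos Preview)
The paper does not prove this theorem; it is quoted without proof as part of the literature survey, with the citation to \cite{Kloeckner-2008}. There is therefore no ``paper's own proof'' to compare your proposal against.

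As to the proposal itself: your outline follows the same architecture as Kloeckner's original argument --- the barycenter Pythagorean splitting, the identification of the Euclidean de Rham factor via Clifford translations, the reduction to apex-fixing isometries of the centred cone $P_2^c(\R^n)$ --- and these steps are correct. However, you explicitly leave the decisive step unfinished: you need that every apex-fixing isometry of $P_2^c(\R^n)$ is $R_\#$ for some $R\in O(n)$ when $n\ge 2$, and you only sketch an idea (track covariances, use a Gaussian reference family with the Bures--Wasserstein metric) without carrying it out. The specific obstacle you have not addressed is why a general $W_2$-isometry should preserve any such distinguished family; there is no a priori reason an isometry of $P_2^c(\R^n)$ should send Gaussians to Gaussians, so the reduction to the Bures--Wasserstein isometry problem is not justified. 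Kloeckner's actual proof of this step works differently: he analyzes the geodesic (cone) structure, identifies which directions at the apex are ``regular'' versus singular in a metric sense, and shows that for $n\ge 2$ the resulting constraints force the induced isometry on the space of directions to come from $O(n)$. Until you supply a genuine argument for this rigidity step, the proposal is an outline rather than a proof.
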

So in "higher" dimensions we do not have exotic isometries but we still have non-trivial isometries.
We need some notation to explain how the non-trivial isometries look like.
\par
Given a $\mu \in P_2\ler{\R^n},$ the \emph{center of mass of $\mu$} is denoted by $c_\mu,$ that is, $c_\mu=\int_{\R^n} x \dd \mu(x).$ Furtheremore, for any $y \in \R^n,$ the associated translation is denoted by $\eta_y,$ that is, $\eta_y(x)=x+y \, \ler{x \in \R^n}.$ Now we can describe the non-trivial isometries: for any $\varphi \in O(n),$ the map
$$
\mu \mapsto \ler{\eta_{c_\mu}}_{\#}\circ \varphi_{\#} \circ \ler{\eta_{c_\mu}^{-1}}_{\#} (\mu)
$$
is a $2$-Wasserstein isometry which leaves the Dirac measures invariant.
\section{Wasserstein isometries on $P\ler{S^{n-1}}$}
After having reviewed some recent results in the topic, now we turn to the main problem of the current paper which is the description of the $p$-Wasserstein isometries on measures defined on unit balls of Euclidean spaces.
\par
Let $n \geq 2$ be arbitrary and let us consider the separable metric space
$$S^{n-1}:=\left\{x \in \R^n \, \middle| \, \norm{x}=\frac{1}{2}\right\},$$
where $\norm{.}$ denotes the Euclidean norm. We consider the Euclidean distance $d(x,y)=\norm{x-y}$ on the unit sphere $S^{n-1}.$ Clearly, the Euclidean distance is bounded on any unit ball, so for all $n\geq 2$ we have $P_p\ler{S^{n-1}}=P\ler{S^{n-1}}$ for all $p\geq 1.$ Our arguments that we present soon works for all $p \geq 1,$ so from now on, let $p\in [1, \infty)$ be arbitrary.

\begin{claim} \label{claim:dirac-char}
Set $\mu \in P\ler{S^{n-1}}.$ The followings are equivalent.
\ben
\item \label{mu-dirac} $\mu$ is a Dirac measure, that is, there exists $x \in S^{n-1}$ such that $\mu=\delta_x.$
\item \label{egytavolsag} There exists $\nu \in P\ler{S^{n-1}}$ such that $W_p(\mu, \nu)=1.$
\een
\end{claim}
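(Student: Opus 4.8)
The plan is to exploit the fact that the sphere $S^{n-1}$ has been rescaled to radius $\fel$ precisely so that its \emph{diameter equals $1$}. The starting computation is
$$
\norm{x-y}^2 = \norm{x}^2 + \norm{y}^2 - 2\inner{x}{y} = \fel - 2\inner{x}{y} \qquad \ler{x,y \in S^{n-1}},
$$
and since $\inner{x}{y} \geq -\norm{x}\norm{y} = -\tfrac14$ with equality exactly when $y=-x$, this gives $d(x,y)=\norm{x-y} \leq 1$ on $S^{n-1}\times S^{n-1}$, with equality if and only if $y$ is the antipode $-x$ of $x$. Consequently $d(x,y)^p \leq 1$ everywhere, so $\int d^p \dd\pi \leq 1$ for every coupling $\pi$, and therefore $W_p(\mu,\nu)\leq 1$ for all $\mu,\nu \in P\ler{S^{n-1}}$. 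For the implication (\ref{mu-dirac}) $\Rightarrow$ (\ref{egytavolsag}) I would simply take $\nu=\delta_{-x}$; by the identity $W_p(\delta_x,\delta_y)=d(x,y)$ recalled in the text this yields $W_p(\mu,\nu)=d(x,-x)=1$.

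The substantive direction is (\ref{egytavolsag}) $\Rightarrow$ (\ref{mu-dirac}). Here the key observation is that, because every coupling already satisfies $\int d^p \dd\pi \leq 1$, the equality $W_p(\mu,\nu)^p = \inf_{\pi} \int d^p \dd\pi = 1$ can hold \emph{only if} $\int d^p \dd\pi = 1$ for \emph{every} $\pi \in \Pi(\mu,\nu)$ (if a single coupling gave a value strictly below $1$, the infimum would drop below $1$). Since $0 \leq d^p \leq 1$, the relation $\int \ler{1-d^p}\dd\pi = 0$ forces $d(x,y)=1$, equivalently $y=-x$, for $\pi$-almost every $(x,y)$ --- and this now holds simultaneously for all couplings.

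I would then feed into this the one coupling available for \emph{any} pair of measures, namely the product (independent) coupling $\pi_0 = \mu \otimes \nu$. The conclusion $d=1$ $\pi_0$-almost everywhere reads $\ler{\mu \otimes \nu}\ler{\{(x,y) : y \neq -x\}}=0$, and Fubini's theorem turns this into the statement that for $\mu$-almost every $x$ one has $\nu\ler{\{y : y \neq -x\}}=0$, i.e.\ $\nu=\delta_{-x}$. But $\nu$ is one fixed measure, so its atom $-x$ --- hence $x$ itself --- must be the same for $\mu$-almost every $x$; that is, $\mu$ is concentrated at a single point $x_0$ and is a Dirac measure $\delta_{x_0}$ (and incidentally $\nu=\delta_{-x_0}$).

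I expect the only genuinely delicate point to be the passage \emph{infimum equal to $1$ implies every coupling attains $1$}, which is exactly where the uniform bound $d \leq 1$ does the real work; everything else is the antipodal computation and a routine Fubini argument (the measurability needed there is harmless, since $\{(x,y) : y=-x\}$ is the closed graph of the antipodal map). Notably, this route never invokes compactness of $S^{n-1}$ or the existence of an optimal coupling --- testing against the product coupling $\mu \otimes \nu$ suffices, which I regard as the main idea rather than the main obstacle.
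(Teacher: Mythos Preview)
Your proof is correct and follows essentially the same route as the paper: both arguments hinge on testing against the product coupling $\mu\times\nu$ and invoking Fubini together with the diameter bound $d\leq 1$ (with equality only at antipodes). The only difference is cosmetic---the paper argues the contrapositive (if $\mu$ is not Dirac then $\int d(x,y)^p\,\dd\mu(x)<1$ for every $y$, hence $\int d^p\,\dd(\mu\times\nu)<1$), whereas you argue directly that $\inf_\pi\int d^p\,\dd\pi=1$ forces $\int d^p\,\dd(\mu\times\nu)=1$ and hence antipodal support; your version is slightly slicker and also identifies $\nu=\delta_{-x_0}$ along the way.
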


\begin{proof}
The implication \eqref{mu-dirac} $\Longrightarrow$ \eqref{egytavolsag} is clear by the following short argument. For any $x,y \in S^{n-1}$ and for any $p\geq 1,$ we have $W_p \ler{\delta_x, \delta_y}=d(x,y)=\norm{x-y}.$ Therefore, if $\mu=\delta_x,$ then by the choice $\nu=\delta_{-x}$ we have
$$
W_p\ler{\mu, \nu}=W_p\ler{\delta_x, \delta_{-x}}=d(x,-x)=\norm{x-(-x)}=1.
$$
The proof of the direction \eqref{egytavolsag} $\Longrightarrow$ \eqref{mu-dirac} is a bit more complicated. We have to show that if $\mu \in P\ler{S^{n-1}}$ is not a Dirac measure, then $W_p(\mu, \nu)<1$ holds for all $\nu \in P\ler{S^{n-1}}.$ So, assume that $\mu \in P\ler{S^{n-1}}$ is not a Dirac measure and let $y \in S^{n-1}$ be arbitrary. Then there exists some $\eps >0$ such that
\be \label{eq:eta-def}
\mu \ler{\left\{x \in S^{n-1} \, \middle| \, d(x,y) \leq 1-\eps\right\}}>0.
\ee
(Otherwise, $\mu$ would be equal to $\delta_{-y}.$) Let us denote by $\eta$ this positive number appearing on the left hand side of \eqref{eq:eta-def} in the sequel. The estimation
$$
\int_{S^{n-1}} d(x,y)^p \dd \mu (x)
$$
$$
=
\int_{\left\{x \in S^{n-1}\,\middle|\, d(x,y) \leq 1-\eps\right\}}
d(x,y)^p \dd \mu (x)
+
\int_{\left\{x \in S^{n-1}\,\middle|\, d(x,y) > 1-\eps\right\}}
d(x,y)^p \dd \mu (x)
$$
$$
\leq \eta (1-\eps)^p+(1-\eta) \cdot 1 <1
$$
shows that the map
$$
S^{n-1} \rightarrow [0,1]; \qquad y \mapsto \int_{S^{n-1}}d(x,y)^p \dd \mu(x)
$$
is strictly less than $1$ everywhere. Therefore, we have
\be \label{eq:kisebb-egynel}
\int_{S^{n-1}} \int_{S^{n-1}} d(x,y)^p \dd \mu(x) \dd \nu(y) <1
\ee
for any Borel probability measure $\nu.$
The map $(x,y)\mapsto d(x,y)^p$ is bounded and both $\mu$ and $\nu$ are probability measures, hence \emph{Fubini's theorem} can be applied to show that the integral on the left hand side of \eqref{eq:kisebb-egynel} is equal to
$$
\int_{S^{n-1} \times S^{n-1}} d(x,y)^p \dd \ler{\mu \times \nu}(x,y).
$$
The measure $\mu \times \nu$ is clearly a coupling of $\mu$ and $\nu,$ hence by the definition of the Wasserstein distance (see eq. \eqref{eq:wasser_def}) we have
$$
W_p^p\ler{\mu, \nu}=\inf_{\pi \in \Pi(\mu, \nu)} \int_{S^{n-1} \times S^{n-1}} d^p (x,y) \mathrm{d}\pi(x,y)
$$
$$
\leq
\int_{S^{n-1} \times S^{n-1}} d(x,y)^p \dd \ler{\mu \times \nu}(x,y).
$$
So, we deduced that $W_p^p\ler{\mu, \nu}<1$ which means that $W_p\ler{\mu, \nu}<1.$ The measure $\nu \in P\ler{S^{n-1}}$ was arbitrary, hence the proof is done.
\end{proof}

The following result may be considered as a first step in the direction of a Banach-Stone-type result on the structure of the Wasserstein isometries of probability measures on unit spheres.

\begin{theorem} \label{thm:main}
Let $\phi: P\ler{S^{n-1}} \rightarrow P\ler{S^{n-1}}$ be a (not necessarily surjective) Wasserstein isometry, that is, a map satisfying
$$
W_p \ler{\phi(\mu),\phi(\nu)}=W_p \ler{\mu,\nu} \qquad \ler{\mu, \nu \in P\ler{S^{n-1}}}.
$$
Then there exists an isometry $T: S^{n-1} \rightarrow S^{n-1}$ such that
$$
\phi\ler{\delta_x}=T_\# \delta_x, \text{ that is, } \phi\ler{\delta_x}=\delta_{T(x)} \qquad \ler{x \in S^{n-1}}. 
$$
\end{theorem}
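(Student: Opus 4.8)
The plan is to exploit the metric characterization of Dirac measures supplied by Claim \ref{claim:dirac-char}, which reduces everything to elementary properties of distance-preserving maps. I would proceed in three stages: first show that $\phi$ sends Dirac measures to Dirac measures, thereby inducing a map $T$ on $S^{n-1}$; then check that $T$ is distance-preserving; and finally verify that $T$ is onto, so that it is a genuine isometry of the sphere.

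For the first stage, fix $x \in S^{n-1}$ and pass to the antipodal point $-x \in S^{n-1}$. Since $\norm{x} = 1/2$, we have $W_p\ler{\delta_x, \delta_{-x}} = d(x,-x) = \norm{x-(-x)} = \norm{2x} = 1$, and the isometry property of $\phi$ then yields
$$W_p\ler{\phi\ler{\delta_x}, \phi\ler{\delta_{-x}}} = W_p\ler{\delta_x, \delta_{-x}} = 1.$$
Hence $\phi\ler{\delta_x}$ is a measure admitting another measure (namely $\phi\ler{\delta_{-x}}$) at $W_p$-distance exactly $1$, so by the equivalence in Claim \ref{claim:dirac-char} it must be a Dirac measure. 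I would stress that this step uses only the value $\phi\ler{\delta_{-x}}$, which automatically lies in the range of $\phi$; consequently surjectivity of $\phi$ is never invoked, in agreement with the hypothesis of the theorem. Since $x \mapsto \delta_x$ is injective, there is a unique point $T(x) \in S^{n-1}$ with $\phi\ler{\delta_x} = \delta_{T(x)}$, and this defines a map $T: S^{n-1} \to S^{n-1}$.

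The second stage is immediate: for all $x,y \in S^{n-1}$,
$$d\ler{T(x),T(y)} = W_p\ler{\delta_{T(x)}, \delta_{T(y)}} = W_p\ler{\phi\ler{\delta_x}, \phi\ler{\delta_y}} = W_p\ler{\delta_x,\delta_y} = d(x,y),$$
using $W_p\ler{\delta_a,\delta_b} = d(a,b)$ at the two ends and the isometry property of $\phi$ in the middle. Thus $T$ is an isometric embedding of $S^{n-1}$ into itself.

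For the final stage I would upgrade $T$ to a surjection by invoking compactness. The classical fact here is that a distance-preserving self-map of a compact metric space is automatically surjective: if some $q \in S^{n-1}$ failed to lie in the (compact, hence closed) set $T\ler{S^{n-1}}$, then $\eps := d\ler{q, T\ler{S^{n-1}}} > 0$, and the orbit $q, T(q), T^2(q), \dots$ would be $\eps$-separated, since $d\ler{T^k(q), T^{k+j}(q)} = d\ler{q, T^j(q)} \geq \eps$ for every $j \geq 1$; this contradicts the sequential compactness of $S^{n-1}$. Therefore $T$ is a bijective isometry of $S^{n-1}$, finishing the argument. The whole proof is essentially formal once Claim \ref{claim:dirac-char} is available; the only non-bookkeeping point --- and the place I expect to be the real obstacle --- is the passage to surjectivity, which genuinely relies on the compactness of the sphere and would fail on a general (non-compact) Polish space.
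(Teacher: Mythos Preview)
Your argument is correct and follows essentially the same route as the paper's proof: invoke Claim~\ref{claim:dirac-char} to see that Dirac measures go to Dirac measures, then use $W_p(\delta_x,\delta_y)=d(x,y)$ to conclude that the induced map $T$ is distance-preserving. The only difference is your third stage: the paper's proof stops once $T$ is shown to be distance-preserving and does not discuss surjectivity at all, whereas you add the standard compactness argument to upgrade $T$ to a bijection. This is a harmless (and arguably clarifying) addition rather than a different approach.
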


\begin{proof}
Let $x \in S^{n-1}$ be arbitrary. By Claim \ref{claim:dirac-char}, there exists a $\nu \in P\ler{S^{n-1}}$ such that $W_p\ler{\delta_x, \nu}=1.$ By assumption, $W_p\ler{\phi\ler{\delta_x}, \phi\ler{\nu}}=1.$ By Claim \ref{claim:dirac-char}, this means that $\phi\ler{\delta_x}$ is a Dirac measure. So, $\phi$ sends Dirac measures to Dirac measures. That is, there exists a map $T: S^{n-1} \rightarrow S^{n-1}$ such that $\phi\ler{\delta_x}=\delta_{T(x)}$ holds for all $x \in S^{n-1}.$ We have to show that $T$ is an isometry. But this is clear, because $W_p\ler{\delta_x, \delta_y}=d(x,y).$ Indeed, by this elementary fact we have
$$
d(x,y)=W_p\ler{\delta_{x},\delta_{y}}=W_p\ler{\phi\ler{\delta_{x}},\phi\ler{\delta_{y}}}
$$
$$
=W_p\ler{\delta_{T(x)},\delta_{T(y)}}=d\ler{T(x), T(y)}
$$
for every $x,y \in S^{n-1}.$ The proof is done.
\end{proof}

\subsection*{Final remarks}
Let us emphasize that we did not assume the surjectiviy of the isometries in our previous arguments.
\par
Naturally our most concrete future plan is to discover wheter the measure spaces on the unit balls are isometrically rigid, or we also have some non-trivial isometries (let alone exotic isometries). We believe that the answer depends on the dimension $n$ and on the value of the parameter $p,$ as well.

\subsection*{Acknowledgement}
The author is grateful to Gy\"orgy P\'al Geh\'er and Tam\'as Titkos for drawing his attention to some of the works listed in the Bibliography, and for useful discussions. The author is also grateful to the anonymous referee for many valuable comments and suggestions that helped to improve the presentation of the paper substantially.

\end{document}